\numberwithin{equation}{section}
\newcommand{\su}{{\rm  supp\,\, }}
\newcommand{\p}{\mbox{\scriptsize $\rm{\Pi}$}}
\newcommand{\ddr}{${\cal D}'^{ \{ M_{p } \} }$}
\newcommand{\md}{{\cal D}^{\{M_{p }\}}}
\newcommand{\me}{{\cal E}^{\{M_{p}\}}}
\newtheorem{lemma}{Lemma}
\newtheorem{proposition}{Proposition}
\newtheorem{definition}{Definition}
\renewcommand{\(}{\left(}
\renewcommand{\)}{\right)}
\begin{document}

 \title {Sequential conditions of integrability of Roumieu ultradistributions}

 \author{Svetlana Mincheva-Kami\'nska}


\maketitle

 {\hspace{-.6cm}\textsl{Institute of Mathematics,
 Faculty of Mathematics and Natural Sciences, \\
\hspace{.3cm} University of Rzesz\'ow, \\
\hspace{.3cm} Prof. Pigonia 1, 35-310 Rzesz\'ow, Poland} \\
 \hspace{.3cm} \textsl{email}: \emph{minczewa@ur.edu.pl}

\vspace{1.2cm}

 \begin{abstract}
  The main result of the paper is the equivalence of five
  general conditions for integrability
 of
Roumieu ultradistributions
in the space $\mathcal{D}'^{\{M_p\}}$. The result is
an analogue of the known theorems of P. Dierolf ans J. Voigt on integrable distributions
 and S. Pilipovi\'c on integrable ultradistributions of Beurling type.
 Two of our equivalent
 conditions are based on the classes
 ${\mathbb U}^{\{M_p\}}$ and $\overline{\mathbb U}^{\{M_p\}}$\! of $\mathfrak{R}$-approximate units. We use the same classes
 in our separate paper to introduce
 some
 sequential definitions of the convolution of Roumieu ultradistributions in $\mathcal{D}'^{\{M_p\}}$,
equivalent to the
definition of S. Pilipovi{\'c} and B. Prangoski of the convolution in $\mathcal{D}'^{\{M_p\}}$.



 \textsl{Keywords}: {Roumieu ultradistributions, integrability of Roumieu ultradistributions, convolution of Roumieu ultradistributions, (special) $\mathfrak{R}$-approximate unit.}

 \textsl{MSC}: {46F05 46F10, 46E10.}

 \end{abstract}


 \section{Introduction}\label{sec1}

Integrability of a function (generalized function) is strictly connected with the notion of the convolution of two
functions (generalized functions).
For Schwartz distributions this interplay can be noticed already
in pioneering works of C. Chevalley \cite{Che} and L. Schwartz \cite{Sch-Sem} and then in publications of their followers.
An important contribution concerning integrability of distributions was done by P. Dierolf and J. Voigt in \cite{DiVo}. They gave there a characterization
of integrable distributions that consists of several equivalent conditions. Among them there are two sequential conditions, expressed in terms of certain classes of approximate units.
The characterization appeared to be very efficient in investigations concerning the convolution of distributions; in particular, in the proof of the equivalence of
general definitions of the convolution of distributions considered by various authors (see \cite{Che}, \cite{Sch-Sem}, \cite{shi}, \cite{HOR}, \cite{VLA1}, \cite{Kam}, \cite{M-K}).

The sequential conditions that appeared in the mentioned characterization of integrable distributions referred
to the sequential approaches to the convolution of distributions (see \cite{AMS} and \cite{VLA2})
and inspired investigations in the functional and sequential theories of the convolution of ultradistributions (see \cite{CKP} and references given there).

An initiator of investigations concerning integrability and convolvability of ultradistributions was S. Pilipovi\'c.
He proved in \cite{Pil91} an analogue of the theorem of Dierolf and Voigt on integrability of ultradistributions of Beurling type
and his result was then applied to the convolution of ultradistributions of Beurling type (see \cite{Pil91} and \cite{CKP}).

The aim of this paper is to prove a characterization of integrable ultradistributions of Roumieu  type (see Theorem in section 4) in which
two
conditions of integrability appear, being counterparts of the sequential conditions of Dierolf and Voigt in \cite{DiVo}.
The conditions are given in terms of the classes of $\mathfrak{R}$-approximate units (Definition 4)
and special $\mathfrak{R}$-approximate units (Definition 5), where $\mathfrak{R}$ denotes the class of all numerical sequences  increasing to infinity,
important in H. Komatsu's investigations of ultradistributions in \cite{Kom3}.

In \cite{SMK}, we apply Theorem proved in section 4 to show
that the
definition
of the convolution of Roumieu ultradistributions, given in \cite{PP}
and
investigated
by P. Dimovski, S. Pilipovi{\'c}, B. Prangoski and J. Vindas
(see \cite{PP}, \cite{DPV}, \cite{DPrV}, \cite{DPPV}, \cite{PPV}), is equivalent to several sequential definitions
of Roumieu ultradistributions introduced in \cite{PP} by means of the mentioned classes of $\mathfrak{R}$-approximate units
and special $\mathfrak{R}$-approximate units.


 \section{Prerequisities}

    In what follows we will consider
    (complex-valued) functions
    and Roumieu ultradistributions defined on $\mathbb{R}^d$ using the standard multi-dimensional notation
    in $\mathbb{R}^d$.

  To abbreviate the notation
  we will denote
 the considered spaces of test functions  and the corresponding spaces
 of Roumieu ultradistributions on $\mathbb{R}^d$, for a given index $d$, by adding $d$
 at the end of the respective symbol. Analogously, we will denote
 the constant function $1$ on
 $\mathbb{R}^{d}$
 by $1_d$ and the value of the functional $T\in{\cal{D}'}^{\{M_p\}}_d$
 on $\varphi\in{\cal{D}}^{\{M_p\}}_d$ by $\langle T, \varphi\rangle_d$.

\smallskip
The considered spaces of test functions and Roumieu ultradistributions depend
on a given
sequence
$(M_{p})_{p\in\mathbb{N}_0}$ of positive numbers, satisfying certain conditions.
In this note we will always assume the following three conditions:

\bigskip
 \noindent
 (M.1)\
\quad \qquad $M^{2}_{p} \leq M_{p-1} M_{p+1},\qquad p \in \mathbb{N};$

 \bigskip
 \noindent
 (M.2)\
  \quad \qquad $M_{p}\leq AH^{p}M_qM_{p-q},\qquad p,\,q\in \mathbb{N}_0,\; q\leq p;$


 \bigskip
 \noindent
 (M.3)\
\quad \qquad $\sum^{\infty}_{p=q+1} M_{p-1}M_{p}^{-1} \leq Aq M_{q}M_{q+1}^{-1},\quad q \in \mathbb{N}$


 \bigskip
 \noindent
 for certain constants $A>0$ and $H>0$. Clearly, we may and will assume that $H\geq1$.



It follows by induction from (M.1) that
\[
\frac{M_i}{M_{i-1}}\leq\frac{M_{p+i}}{M_{p+i-1}}, \qquad p\in \mathbb{N}_0, \ i\in \mathbb{N},
\]
and thus
\[
\frac{M_q}{M_0} = \prod^q_{i=1}\frac{M_i}{M_{i-1}}\leq \prod^q_{i=1}\frac{M_{p+i}}{M_{p+i-1}} = \frac{M_{p+q}}{M_p},
      \qquad p\in \mathbb{N}_0, \ q\in \mathbb{N}.
\]
Consequently, (M.1) implies $ M_p\cdot M_q \leq M_0M_{p+q}$ for $p, q\in \mathbb{N}_0$.

For simplicity we assume in the sequel that $M_0 = 1$. With this assumption, the last inequality gets the form:
 \begin{equation}\label{Mpq}
   M_p\cdot M_q \leq M_{p+q}, \qquad p, q\in \mathbb{N}_0.
     \end{equation}

 We define the multi-dimensional version of the sequence
 $(M_{p})_{p\in\mathbb{N}_0}$ as follows:
 \[
  M_k := M_{k_1+\ldots+k_d},\qquad k = (k_1,\ldots,k_d)\in \mathbb{N}_0^d.
 \]
This definition leads one to the following extended version of inequality (\ref{Mpq}):
 \begin{equation}\label{Mkjd}
   M_k\cdot M_l \leq M_{k+l}, \qquad k, l\in \mathbb{N}^d_0.
     \end{equation}

 Recall that the {\it associated function} of the sequence  $(M_p)$ is defined by
\[
M(\rho)= \sup_{p\in\mathbb{N}_0} \log_+\frac{\rho^p}{M_p},  \quad \rho>0.
\]
\smallskip


We denote by $D^k$, for $k = (k_1,\ldots,k_d)\in\mathbb{N}_0^d$, the following differential operator:

\[\displaystyle D^k=D_1^{k_1}\cdots D_d^{k_d}:=
\left(\frac{1}{i}\frac{\partial}{\partial x_1}\right)^{k_1}\cdots\left(\frac{1}{i}\frac{\partial}{\partial x_d}\right)^{k_d}.
\]
\smallskip

An important role in our considerations is played by the class of sequences $(r_{\!p})_{p\in\mathbb{N}_0}$
of real numbers, with $r_0=1$, which  mo\-no\-tono\-usly increase to infinity.
 We denote this class, after \cite{PP} and  \cite{DPPV}, by $\mathfrak{R}$.


 For every $(r_{\!p})\in\mathfrak{R}$ we call $(R_p)$ the \textit{product sequence}
 corresponding to  $(r_{\!p})$ if its elements are of the form
 $R_p:=\prod_{i=0}^p r_i$ for $p\in\mathbb{N}_0$ (i.e. $R_0=1$).

 Later on we will use a result of H. Komatsu proved in \cite{Kom3}
(Lemma\,3.4 and Proposition\,3.5) that we formulate below in an equivalent
 shape of two entirely symmetric assertions (using the above notation for the product sequence
 corresponding to a given $(r_{\!p})\in\mathfrak{R}$):

\begin{lemma}\label{Kom1}
Let $a_p\geq 0$ for $p\in\mathbb{N}_0$.
The following two conditions are equivalent:

\bigskip
$(A_1)$\quad  $\sup_{p\in\mathbb{N}_0}\frac{a_p}{h^p} <\infty$\quad  for a certain
$h> 0$;

\bigskip
$(B_1)$\quad $\sup_{p\in\mathbb{N}_0}\frac{a_p}{R_k} <\infty$\quad for every
$(r_p)\in \mathfrak{R}$.
\end{lemma}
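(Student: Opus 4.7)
The plan is to prove the two implications separately.

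For $(A_1)\Rightarrow (B_1)$, suppose $a_p\leq Ch^p$ for all $p\in\mathbb{N}_0$ with some constants $C,h>0$. Given an arbitrary $(r_p)\in\mathfrak{R}$, since $r_p\nearrow\infty$, I can choose an index $p_0$ such that $r_p\geq 2h$ for all $p\geq p_0$. Writing
\[
\frac{h^p}{R_p}=\frac{h^{p_0}}{R_{p_0}}\cdot\prod_{i=p_0+1}^{p}\frac{h}{r_i}\leq \frac{h^{p_0}}{R_{p_0}}\cdot 2^{-(p-p_0)},\qquad p\geq p_0,
\]
I conclude that $h^p/R_p$ is uniformly bounded in $p$, and hence so is $a_p/R_p\leq Ch^p/R_p$; this gives $(B_1)$.

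For the harder implication $(B_1)\Rightarrow (A_1)$ I would argue by contraposition. If $(A_1)$ fails, then for every $h>0$, $\sup_p a_p/h^p=\infty$; applying this with $h=1,2,3,\dots$ I can select a strictly increasing sequence of indices $p_1<p_2<\cdots$ (setting $p_0:=0$) such that
\[
a_{p_n}\geq n\cdot n^{p_n},\qquad n\in\mathbb{N}.
\]
Now I construct a step-shaped sequence $(r_p)\in\mathfrak{R}$ by declaring $r_0:=1$ and $r_p:=n$ for every index $p$ with $p_{n-1}<p\leq p_n$. This sequence is non-decreasing, starts from $1$, and tends to infinity, so it lies in $\mathfrak{R}$. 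The corresponding product satisfies
\[
R_{p_n}=\prod_{k=1}^{n}k^{\,p_k-p_{k-1}}\leq \prod_{k=1}^{n}n^{\,p_k-p_{k-1}}=n^{p_n},
\]
and consequently $a_{p_n}/R_{p_n}\geq n$, which shows that $\sup_p a_p/R_p=\infty$ and so $(B_1)$ fails.

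The main obstacle is in the reverse direction: one must manufacture a single sequence $(r_p)\in\mathfrak{R}$ that simultaneously witnesses the unboundedness of $a_p/h^p$ across the full range of $h$. The step-function construction above is the natural device, and its correctness rests on the simple but essential estimate $R_{p_n}\leq n^{p_n}$; if the convention of $\mathfrak{R}$ demanded strict monotonicity instead of mere non-decreasingness, one would only need to add a vanishing perturbation (e.g.\ $r_p\mapsto r_p+p/(p+1)^2$ rescaled) that does not affect the dominant growth of $R_{p_n}$.
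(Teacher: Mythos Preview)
Your argument is correct. The forward implication is straightforward, and your contrapositive construction for $(B_1)\Rightarrow(A_1)$ is exactly the right idea: a step sequence $(r_p)$ built from a diagonal choice of indices $p_n$ where $a_{p_n}\ge n\cdot n^{p_n}$, together with the telescoping estimate $R_{p_n}\le n^{p_n}$, yields $a_{p_n}/R_{p_n}\ge n$. The paper itself does not give a proof of this lemma at all; it simply attributes the result to Komatsu (Lemma~3.4 and Proposition~3.5 of \cite{Kom3}) and states it without argument. So there is nothing to compare at the level of proof strategy---you have supplied a self-contained elementary proof where the paper relies on a citation.

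One small remark on your closing caveat about strict monotonicity: the paper's convention for $\mathfrak{R}$ is the non-strict one (see the $\nearrow$ notation in Remark~2 and the proof of Lemma~\ref{Rk-ineq}, which only uses $r_i\le r_{p+i}$), so your step sequence already lies in $\mathfrak{R}$ and no perturbation is needed. If one did want a strictly increasing variant, your suggested fix is slightly imprecise as written---adding a bounded perturbation can inflate $R_{p_n}$ to roughly $(n+1)^{p_n}$, which need not be dominated by $n\cdot n^{p_n}$---but this is easily repaired by choosing $p_n$ against $h=n+1$ rather than $h=n$ in the diagonal selection.
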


\begin{lemma}\label{Kom2}
Let $a_k\geq 0$ for $k\in\mathbb{N}_0$.
The following two conditions are equivalent:

\bigskip
$(A_2)$\quad  $\sup_{p\in\mathbb{N}_0}\frac{a_p}{h^p} <\infty$\qquad  for every
$h> 0$;

\bigskip
$(B_2)$\quad $\sup_{p\in\mathbb{N}_0}\frac{a_p}{R_p} <\infty$\qquad for a certain
$(r_p)\in \mathfrak{R}$.
 \end{lemma}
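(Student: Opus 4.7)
The strategy mirrors the proof of Lemma~\ref{Kom1}, but with the quantifiers on $h$ and on $(r_p)$ interchanged; this swaps which direction is the algebraic reduction and which is the construction.

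For $(B_2)\Rightarrow(A_2)$, I would fix the single sequence $(r_p)\in\mathfrak{R}$ and the constant $C$ guaranteed by $(B_2)$, and let an arbitrary $h>0$ be given. Since $r_p\nearrow\infty$ monotonically, there is an index $p_0=p_0(h)$ with $r_p>h$ for all $p>p_0$. Splitting $R_p=\prod_{i=0}^p r_i$ at this threshold yields a multiplicative comparison of the form $R_p\geq R_{p_0}h^{p-p_0}$ for $p>p_0$, which can be rearranged and combined with the $(B_2)$-estimate $a_p\leq CR_p$ to produce a bound of $(A_2)$-type with constant $C_h$ depending on $h$ only through $p_0$ and the prefix $R_{p_0}$. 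On the finite set $p\leq p_0$ the claimed sup is automatically finite. This is the direct analogue of the $(A_1)\Rightarrow(B_1)$ step in Lemma~\ref{Kom1}.

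For $(A_2)\Rightarrow(B_2)$, the plan is to extract a single $(r_p)\in\mathfrak{R}$ from the infinite family of estimates $a_p\leq C_k h_k^p$ obtained by applying $(A_2)$ at a sequence $h_k\searrow 0$, say $h_k=1/k$. I would build $(r_p)$ blockwise: choose rapidly growing breakpoints $0=p_0<p_1<p_2<\cdots$, set $r_p=k$ for $p$ in the $k$-th block $(p_{k-1},p_k]$, and verify by induction on $k$ that $R_p\leq k^{p}$ on that block, using the elementary inequality $((k-1)/k)^{p_{k-1}}\leq 1$ to pass from one block to the next. On the $k$-th block the $(A_2)$-bound $a_p\leq C_k k^{-p}$ then yields $a_p/R_p\leq C_k$; by making the breakpoints $p_k$ sparse enough the constants $C_k$ can be absorbed into a single global $C$. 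By construction the resulting $(r_p)$ is monotone increasing with $r_0=1$ and diverges to $+\infty$, so it lies in $\mathfrak{R}$.

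The main obstacle is this blockwise construction: the breakpoints $p_k$ must be selected delicately, both to keep the inductive estimate on $R_{p_k}$ valid across successive blocks and to ensure that the a priori growing constants $C_k$ are absorbed into a uniform bound on all of $\mathbb{N}_0$. This is the classical Komatsu diagonalization from \cite{Kom3}; once the breakpoints are fixed, the remaining verification is routine algebra, and the entire argument is entirely symmetric to the proof of Lemma~\ref{Kom1} under the swap of ``for a certain'' with ``for every''.
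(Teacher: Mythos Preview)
The paper does not supply its own proof of this lemma; it is quoted verbatim from Komatsu \cite{Kom3} (Lemma~3.4 and Proposition~3.5), so there is no in-paper argument to compare against.

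That said, there is a genuine gap in your $(B_2)\Rightarrow(A_2)$ step, and in fact this implication is \emph{false} for the statement exactly as printed. You correctly obtain $R_p\geq R_{p_0}h^{\,p-p_0}$ for $p>p_0$, but this is a \emph{lower} bound on $R_p$; combining it with the $(B_2)$-estimate $a_p\leq C R_p$, which is an \emph{upper} bound on $a_p$ in terms of $R_p$, cannot produce any upper bound on $a_p/h^p$ --- the two inequalities point in the same direction and there is nothing to ``rearrange''. Concretely, for any $(r_p)\in\mathfrak{R}$ take $a_p:=R_p$; then $(B_2)$ holds with constant~$1$, while $a_p/h^p=R_p/h^p\to\infty$ for every fixed $h>0$ because $r_p\to\infty$, so $(A_2)$ fails. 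The printed $(B_2)$ thus carries a typo; Komatsu's actual assertion has $a_pR_p$ (equivalently, $a_p/\prod r_i^{-1}$) in place of $a_p/R_p$, and your threshold argument and blockwise construction are both tailored to that corrected version --- with the sign fixed, your plan is the standard one.

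As a side remark, for the statement \emph{as written} the direction $(A_2)\Rightarrow(B_2)$ is trivial and needs no diagonalization: applying $(A_2)$ with $h=1$ already gives $\sup_p a_p<\infty$, and since $R_p\geq 1$ for every $(r_p)\in\mathfrak{R}$ one gets $a_p/R_p\leq a_p$ bounded for \emph{all} such sequences, not merely ``a certain'' one.
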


Clearly, the above lemmas can be
extended to their  $d$-dimensional versions, i.e.
 concerning sequences $(a_k)_{k\in\mathbb{N}_0^d}$ of nonnegative numbers.

\medskip

We will also need the following simple lemma:

\begin{lemma}\label{Rk-ineq}
For every $(r_p)\in \mathfrak{R}$,
the following inequality holds:
\begin{equation}\label{Rk-ine}
R_{|k|}\cdot R_{|l|} \leq R_{|k+l|},\qquad\quad k,l\in \mathbb{N}_0^d,
\end{equation}
where $(R_p)$ is the product sequence corresponding to $(r_p)$.
\end{lemma}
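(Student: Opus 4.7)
The plan is to first reduce the multi-dimensional inequality to a one-dimensional one, and then to verify it by a direct counting argument exploiting $r_0=1$ and the monotonicity of $(r_p)$.

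First I would observe that, since $k,l\in\mathbb{N}_0^d$ have nonnegative components, we have $|k+l|=|k|+|l|$. Setting $p:=|k|$ and $q:=|l|$, the inequality (\ref{Rk-ine}) therefore reduces to the purely one-dimensional statement
\[
R_p\cdot R_q\leq R_{p+q}\qquad(p,q\in\mathbb{N}_0).
\]
So the whole matter is to prove this last inequality for the product sequence $(R_p)$ associated with an arbitrary $(r_p)\in\mathfrak{R}$.

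By symmetry, I may and will assume $p\leq q$. Writing the product sequence out explicitly, the ratio is
\[
\frac{R_p\,R_q}{R_{p+q}}=\frac{(r_0r_1\cdots r_p)(r_0r_1\cdots r_q)}{r_0r_1\cdots r_{p+q}}.
\]
After cancelling the factors $r_0,r_1,\ldots,r_q$ that occur in both the numerator and the denominator, the numerator reduces to $r_0r_1\cdots r_p$ and the denominator to $r_{q+1}r_{q+2}\cdots r_{q+p}$. Since $r_0=1$ by the very definition of the class $\mathfrak{R}$, this becomes
\[
\frac{R_p\,R_q}{R_{p+q}}=\prod_{i=1}^{p}\frac{r_i}{r_{q+i}}.
\]
Because $(r_p)$ is monotonically increasing and $q\geq 0$, each factor $r_i/r_{q+i}$ is at most $1$, so the product is at most $1$. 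This yields $R_pR_q\leq R_{p+q}$, which together with the reduction above gives (\ref{Rk-ine}).

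There is no real obstacle here; the only point to be careful about is the bookkeeping of indices when cancelling common factors (in particular the use of $r_0=1$, which makes the numerator contain exactly as many surviving factors as the denominator and allows the monotonicity argument to apply termwise).
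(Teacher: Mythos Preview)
Your proof is correct and follows essentially the same approach as the paper: both reduce to the one-dimensional inequality $R_pR_q\leq R_{p+q}$ and derive it from the monotonicity of $(r_p)$ after writing out the products. Your version is merely more explicit about the cancellation and the role of $r_0=1$, which the paper leaves implicit.
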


\begin{proof}
The inequality
 \[
   R_p\cdot R_q =\prod_{i=0}^p r_i\prod_{i=0}^q r_i \leq \prod_{i=0}^{p+q} r_i = R_{p+q},\qquad\quad p, q\in\mathbb{N}_0
\]
follows from the monotonicity of $(r_{\!p})$. Inequality (\ref{Rk-ine}) is its direct consequence.
 \end{proof}


Let us define formally the operation of multiplication of sequences from the class $\mathfrak{R}$
by positive scalars.

\medskip

\begin{definition}\label{mult}
If $(r_{\!p})\in\mathfrak{R}$ and $\lambda>0$, then  by $\lambda (r_{\!p})$
we mean the sequence  $(\overline{r}_{\!p})$, where
$\overline{r}_{0}=1$ and
 $\overline{r}_{\!p} = \lambda r_{\!p}$ for $p\in\mathbb{N}$.
\end{definition}

\bigskip

\noindent
\textbf{Remark 1.}
Assume that $(r_{\!p})\in\mathfrak{R}$.
 It is clear that then $\lambda(r_{\!p})\in\mathfrak{R}$ for $\lambda \geq 1$ and, on the other hand,
$\lambda(r_{\!p})\in\mathfrak{R}$
 for $0 < \lambda < 1$ in case
 $r_1 > \lambda^{-1}$.

\bigskip

 We also define in the class $\mathfrak{R}$
the following partial ordering:

\medskip
\begin{definition}
\label{ordR}
Let $(r_{\!p}), (s_{\!p})\in \mathfrak{R}$.
We will write $(s_{\!p})\prec (r_{\!p})$ whenever $s_{\!p} \leq r_{\!p}$ for all $p\in\mathbb{N}$ and
$\limsup_{p\to\infty} r_p/s_p = \infty$.
\end{definition}



\section{Spaces of ultradifferentiable functions}


   First let us denote formally
   \[
   \|\varphi\|_{\infty} := \sup_{x\in\mathbb{R}^{d}} |\varphi (x)|\qquad\mbox{and}\qquad \|\varphi\|_K := \sup_{x\in K} |\varphi (x)|
   \]
  \smallskip
 for a given
function $\varphi$ on $\mathbb{R}^{d}$ and for a compact set $K$ in $\mathbb{R}^{d}$.

 Now let $(M_p)$ be a given sequence, satisfying conditions (M.1), (M.2) and (M.3), let $K$ be a regular compact set
 in $\mathbb{R}^{d}$
 and let $h > 0$. By
 ${\cal{E}}^{\{M_{p}\}}_{K,h,d} $
we denote the locally convex space
 of all smooth functions (i.e. $\mathcal{C}^{\infty}$-functions)
 $\varphi$ on $\mathbb{R}^{d}$ such that
 \begin{equation}\label{1qn0}
  \sup_{k \in \mathbb{N}^{d}_0}
 \frac{
 \| D^k\varphi \|_K} {h^{|k|}M_{k}} < \infty,
\end{equation}
 with the topology determined by
 the semi-norm $q_{K, h}$ defined by
 \begin{equation}\label{1qn}
  q_{K, h}(\varphi) :=
  \sup_{k \in \mathbb{N}^{d}_0}
 \frac{
 \| D^k\varphi \|_K} {h^{|k|}M_{k}}.
\end{equation}
 On the other hand,
 ${\cal{D}}^{\{M_{p}\}}_{K,h,d}$
 means the Banach space of all smooth
 functions $\varphi$ satisfying (\ref{1qn0})
  and having  supports contained in  $K$,
  with the topology of the norm $q_{K, h}$ given by (\ref{1qn}).

 For a given sequence
 $(M_p)$, satisfying (M.1), (M.2), (M.3), and a regular compact set we define the
 locally convex space $ {\cal D}_{K,d}^{\{M_p\}}$
 of ultradifferentiable functions on $\mathbb{R}^{d}$ by the formula
 \begin{equation}\label{D-K}
 {\cal D}_{K,d}^{\{M_p\}}
 \!:=\;
  \lim_{\begin{subarray}{c}
    \displaystyle\longrightarrow \\
    {\scriptsize h\rightarrow \infty}
    \end{subarray}} \,
 {\cal D}_{K,h,d}^{\{M_p\}}\, ,
 \end{equation}
and then we define the space ${\cal D}_d^{\{M_p\}}$ by
  \[
  {\cal D}_d^{\{M_p\}}
 \,\!:=\!
   \lim_{\begin{subarray}{c}
   \displaystyle\longrightarrow \\
   {\scriptsize K\subset\subset \mathbb{R}^d}
   \end{subarray}} \,
   {\cal D}_{K,d}^{\{M_p\}}.
 \]
Next the space ${\cal E}_d^{\{M_p\}}$ is defined by
 \[
 {\cal E}_d^{\{M_p\}} :=
 \lim_{\begin{subarray}{c}
   \displaystyle\longleftarrow \\
   {\scriptsize K\subset\subset \mathbb{R}^d}
   \end{subarray}} \,
  \lim_{\begin{subarray}{c}
    \displaystyle\longrightarrow \\
    {\scriptsize h\rightarrow \infty}
    \end{subarray}} \;
  {\cal E}_{K,h,d}^{\{M_p\}}\, ,
  \]
\noindent
where
the symbol $K\subset\subset \mathbb{R}^d$
means that compact sets $K$
grow up to $\mathbb{R}^d.$
Moreover let
\[
 {\cal D}_{L^{\infty},d}^{\{M_p\}} \,:=\,
 \lim_{\begin{subarray}{c}
    \displaystyle\longrightarrow \\
    {\scriptsize h\rightarrow \infty}
    \end{subarray}} \,
 {\cal D}_{L^{\infty},h,d}^{\{M_p\}}\, ,
 \vspace{-.2cm}
 \]
 where ${\cal{D}}^{\{M_p\}}_{L^{\infty},h,d}$
 is the Banach space of
 all
 smooth functions $\varphi$
 on $\mathbb{R}^d$ such that
   \[
   \sup\left\{\frac{\|D^k\varphi\|_{\infty}}{h^{k}M_k}\!:\ k \in \mathbb{N}^{d}_{0}\right\} < \infty
  \]
 with the norm $\|\cdot\|_{\infty,h}$ defined by
 \[
   \|\varphi\|_{\infty,h} :=\,
   \sup\left\{\frac{\|D^k\varphi\|_{\infty}}{h^{k}M_k}\!:\ k \in \mathbb{N}^{d}_{0}\right\}.
  \]

\smallskip
 For a given regular compact set
  $K\subset\mathbb{R}^d$, a sequence $(M_p)$ and  a sequence $(r_p)\in\mathfrak{R}$, we denote by
 ${\cal D}^{\{M_p\}}_{K,(r_p),d}$
 the Banach space of all
 smooth functions $\varphi$ on $\mathbb{R}^d$
 with supports contained in $K$ such that
    \[
    \sup_{k \in \mathbb{N}^{d}_0}
        \frac{\| D^k\varphi\|_K}
        {R_{|k|}M_{k}} < \infty
\]
 with the norm $\|\cdot\|_{K,(r_p)}$ defined by
     \begin{equation}\label{DK-ri}
    \|\varphi\|_{K,(r_p)} := \sup_{k \in \mathbb{N}^{d}_0}
        \frac{\| D^k\varphi\|_K}
        {R_{|k|}M_{k}}.
\end{equation}

The following result is essentially due to Komatsu \cite{Kom3}, since it is a consequence of
his
Lemma \ref{Kom2} recalled above.

\begin{proposition} \label{p1}
    We have the equality
\begin{equation}\label{phi12-ine}
 \displaystyle {\cal D}_{K,d}^{\{M_p\}} =
 \lim_{\begin{subarray}{c}
   \displaystyle\longleftarrow \\
   {\scriptsize (r_p)\in\mathfrak{R}}
   \end{subarray}} \,
 \, {\cal D}_{K,(r_p),d}^{\{M_p\}}\, ,
\end{equation}
 where the space $\displaystyle {\cal D}_{K,d}^{\{M_p\}}$ is  defined in (\ref{D-K}).
\end{proposition}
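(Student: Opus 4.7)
The plan is to reduce the proposition to the Komatsu-type lemma stated above, separating the set equality from the topological equality.

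First I would verify the equality as sets. Fix a smooth $\varphi$ with support in $K$ and set $a_k := \|D^k\varphi\|_K / M_k$ for $k \in \mathbb{N}_0^d$. By definition of the inductive limit in (\ref{D-K}), membership $\varphi \in \mathcal{D}_{K,d}^{\{M_p\}}$ is equivalent to the existence of some $h>0$ such that $\sup_k a_k/h^{|k|} < \infty$; this is precisely condition $(A_1)$ of Lemma~\ref{Kom1} in its $d$-dimensional form. On the other hand, membership in the projective limit on the right means $\|\varphi\|_{K,(r_p)} < \infty$, i.e.\ $\sup_k a_k/R_{|k|} < \infty$, for every $(r_p) \in \mathfrak{R}$, which is condition $(B_1)$. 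The equivalence $(A_1) \Leftrightarrow (B_1)$ then yields the set equality.

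Next I would handle the continuity of the identity from the inductive limit into the projective limit. Fix $h>0$ and $(r_p) \in \mathfrak{R}$. Since $r_p \to \infty$, there is $p_0$ with $r_p \geq h$ for $p \geq p_0$; setting $C := \max_{0 \leq p < p_0} h^p/R_p$ gives $h^p \leq C R_p$ for all $p \in \mathbb{N}_0$, whence
\[
\|\varphi\|_{K,(r_p)} \leq C\, q_{K,h}(\varphi), \qquad \varphi \in \mathcal{D}_{K,h,d}^{\{M_p\}}.
\]
Thus the inclusion $\mathcal{D}_{K,h,d}^{\{M_p\}} \hookrightarrow \mathcal{D}_{K,(r_p),d}^{\{M_p\}}$ is continuous. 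By the universal property of the inductive limit, the identity $\mathcal{D}_{K,d}^{\{M_p\}} \to \mathcal{D}_{K,(r_p),d}^{\{M_p\}}$ is continuous for each $(r_p)$, and hence the identity into the projective limit is continuous.

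The main obstacle is the reverse inequality of topologies: showing that the projective limit topology is at least as fine as the inductive limit topology. The standard way is to argue on bounded sets. I would show that every set $B$ bounded in the projective limit is in fact bounded in some Banach step $\mathcal{D}_{K,h,d}^{\{M_p\}}$. Suppose not; then for each $n \in \mathbb{N}$ one finds $\varphi_n \in B$ and $k_n \in \mathbb{N}_0^d$ with $\|D^{k_n}\varphi_n\|_K/(n^{|k_n|}M_{k_n}) \geq n$. A diagonal construction then produces a sequence $(r_p) \in \mathfrak{R}$ along which $\sup_n \|\varphi_n\|_{K,(r_p)} = \infty$, contradicting boundedness of $B$ in the projective limit. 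Once bounded sets coincide, the equality of topologies follows from the fact that the left-hand side is a (DFS)-style inductive limit for which the topology is determined by its bounded sets, together with the continuity already established. This bounded-set/diagonal argument, effectively the converse direction of Komatsu's Lemma~\ref{Kom1}, is where the real work lies.
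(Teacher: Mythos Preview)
Your set-equality argument is exactly the paper's approach: put $a_k=\|D^k\varphi\|_K/M_k$ and invoke Komatsu's equivalence. In fact you cite the correct lemma. The paper's proof refers to Lemma~\ref{Kom2}, but the equivalence actually needed is ``there exists $h>0$ with $\sup_k a_k/h^{|k|}<\infty$'' $\Leftrightarrow$ ``for every $(r_p)\in\mathfrak{R}$ one has $\sup_k a_k/R_{|k|}<\infty$'', which is precisely $(A_1)\Leftrightarrow(B_1)$ of Lemma~\ref{Kom1}; the reference to Lemma~\ref{Kom2} in the paper appears to be a slip.

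Where you differ from the paper is in going further: the paper's three-line proof establishes only the set-theoretic coincidence and is silent about the topologies, whereas you also argue continuity in both directions. Your forward estimate $\|\varphi\|_{K,(r_p)}\le C\,q_{K,h}(\varphi)$ is correct and gives continuity of the identity from the inductive into the projective limit. For the reverse direction your plan is sound---showing that projectively bounded sets already lie in some step ${\cal D}_{K,h,d}^{\{M_p\}}$ is again the content of Lemma~\ref{Kom1}, now applied uniformly over a family---but the final inference ``same bounded sets $\Rightarrow$ same topology'' deserves one more sentence: it follows because ${\cal D}_{K,d}^{\{M_p\}}$ is a (DFS) space, hence Montel, so on each (compact) bounded set the finer topology and the coarser Hausdorff topology must agree, and a (DFS) space carries the finest locally convex topology agreeing with the given one on bounded sets. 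With that remark your argument is complete and strictly more detailed than the paper's.
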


 \begin{proof}
 The assertion is an immediate consequence of the definitions of $q_{K, h}$ in (\ref{1qn}) and of
 $\|\cdot\|_{K,(r_p)}$ in (\ref{DK-ri}) and, moreover,
 of the $d$-dimensional version of Lemma \ref{Kom2}
 with
 \[
 a_k := \frac{\| D^k\varphi\|_K}{ M_{k}},\qquad  k\in \mathbb{N}^d_0
 \]
 for a given function $\varphi$ of the considered space.
   \end{proof}

 For given $(M_p)$ and
$(r_p)\in\mathfrak{R}$ we denote by
${\cal D}_{L^{\infty},(r_p),d}^{\{M_p\}}$
 the Banach space of all
 smooth functions
 $\varphi$ on $\mathbb{R}^d$
  such that
     \[
        \sup_{k \in \mathbb{N}^{d}_0}
       \frac{\|D^k\varphi\|_{\infty}}{{R_{|k|}M_{k}}} < \infty,
     \]
 with the norm $\|\cdot\|_{(r_p)}$ defined by
\begin{equation}\label{D-ri}
        \|\varphi\|_{(r_p)} := \sup_{k \in \mathbb{N}^{d}_0}
       \frac{\|D^k\varphi\|_{\infty}}{{R_{|k|}M_{k}}}.
     \end{equation}

\smallskip
We will need the following projective description of the space ${\cal{D}}^{\{M_p\}}_{L^{\infty},d}$
which follows from the results proved in \cite{DPPV,PPV}:

 For a given sequence $(M_p)$, satisfying  (M.1), (M.2), (M.3),
 the following equality holds
   \begin{equation}\label{Dtil}
   \displaystyle \displaystyle
  {\cal{D}}^{\{M_p\}}_{L^{\infty},d}
  \,=\! \lim_{\begin{subarray}{c}
   \displaystyle\longleftarrow \\
   {\scriptsize (r_p)\in\mathfrak{R}}
   \end{subarray}} \,
 {\cal D}_{L^{\infty},(r_p),d}^{\{M_p\}}
  \end{equation}
in the sense of locally convex spaces.

     We denote by $\dot{\cal{B}}^{\{M_p\}}_d$
 the completion of ${\cal{D}}^{\{M_p\}}_d$ in
	${\cal{D}}^{\{M_p\}}_{L^{\infty},d}$.

\bigskip

\noindent
\textbf{Remark 2.}
 We may assume, if necessary,  that a given sequence $(r_{\!p})\in\mathfrak{R}$ satisfies, for a given constant $c>0$,
 the inequality $r_{\!p}>c$ for all $p\in\mathbb{N}$.

 In fact, since
 $r_{\!p}\nearrow\infty$ as $p\nearrow\infty$, there exists an index $p_0\in\mathbb{N}$ such that
 $r_{\!p}>c$ for $p>p_0$. Therefore we may replace the sequence $(r_{\!p})$ by  the sequence $( \widetilde{r}_{\!p})\in\mathfrak{R}$,
 defined by $\widetilde{r}_0=1$ and $\widetilde{r}_{\!p}:= r_{\!p+p_0}$ for all $p\in\mathbb{N}$, because
$\|\varphi\|_{(r_{\!p})}<\infty$ if and only if $\|\varphi\|_{(\widetilde{r}_{\!p})} <\infty$ for all $\varphi\in{\cal D}_d^{\{M_p\}}$.

\bigskip


\begin{proposition} \label{p2}
If $\varphi_1, \varphi_2\in {\cal{D}}^{\{M_p\}}_{L^{\infty},d}$,
then
$\varphi_1 \cdot \varphi_2\in{\cal{D}}^{\{M_p\}}_{L^{\infty},d}$.
Moreover, for every $(r_{\!p})\in\mathfrak{R}$ such that $r_1 > 2$
the following inequality holds:
   \begin{equation}\label{phi-12-ine}
      \|\varphi_1\, \varphi_2\|_{(r_p)} \leq  \|\varphi_1\|_{{(r_{\!p})}\!/{2}}\cdot
      \|\varphi_2\|_{(r_{\!p})\!/{2}},
     \end{equation}
 where $({r_{\!p}})/{2}\in\mathfrak{R}$ is meant in the sense of Definition \ref{mult}.
\end{proposition}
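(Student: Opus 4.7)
The plan is to apply the Leibniz rule together with the two submultiplicativity estimates for $(M_k)$ and $(R_{|k|})$, and then take advantage of the factor $2^{-|k|}$ that appears when we rescale $(r_p)$ to $(r_p)/2$.

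First I would verify that under the hypothesis $r_1 > 2$ the rescaled sequence $(r_p)/2 \in \mathfrak{R}$: by Remark 1 applied with $\lambda = 1/2$, this is exactly the case $r_1 > \lambda^{-1} = 2$, so the norm $\|\cdot\|_{(r_p)/2}$ is well defined. Writing $(\bar r_p) := (r_p)/2$ and $(\bar R_p)$ for its product sequence, a direct computation using $\bar r_0 = 1$ and $\bar r_p = r_p/2$ for $p \geq 1$ gives
\[
\bar R_p = \prod_{i=0}^{p}\bar r_i = 2^{-p}\prod_{i=1}^{p} r_i = 2^{-p} R_p, \qquad p \in \mathbb{N}_0.
\]

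Next, I would apply the multi-dimensional Leibniz formula
\[
D^k(\varphi_1\varphi_2) = \sum_{l \leq k} \binom{k}{l} D^l\varphi_1 \cdot D^{k-l}\varphi_2, \qquad k \in \mathbb{N}_0^d,
\]
take suprema, and use the defining estimate $\|D^l\varphi_i\|_\infty \leq \|\varphi_i\|_{(r_p)/2}\,\bar R_{|l|} M_l = \|\varphi_i\|_{(r_p)/2}\, 2^{-|l|}R_{|l|} M_l$ (and analogously for $k-l$). This yields
\[
\|D^k(\varphi_1\varphi_2)\|_\infty \leq \|\varphi_1\|_{(r_p)/2}\|\varphi_2\|_{(r_p)/2}\,\sum_{l \leq k}\binom{k}{l}\,2^{-(|l|+|k-l|)}R_{|l|}R_{|k-l|}\,M_l M_{k-l}.
\]

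To finish, I would invoke the two key inequalities: for $l \leq k$ we have $|l|+|k-l| = |k|$, so by Lemma \ref{Rk-ineq} (inequality (\ref{Rk-ine})) and by (\ref{Mkjd}),
\[
R_{|l|} R_{|k-l|} \leq R_{|k|}, \qquad M_l M_{k-l} \leq M_k.
\]
Combining these with $\sum_{l \leq k}\binom{k}{l} = 2^{|k|}$, the factor $2^{|k|}$ cancels against $2^{-|k|}$, giving
\[
\|D^k(\varphi_1\varphi_2)\|_\infty \leq \|\varphi_1\|_{(r_p)/2}\|\varphi_2\|_{(r_p)/2}\, R_{|k|} M_k.
\]
Dividing by $R_{|k|}M_k$ and taking the supremum over $k \in \mathbb{N}_0^d$ yields inequality (\ref{phi-12-ine}); in particular $\varphi_1\varphi_2 \in \mathcal{D}^{\{M_p\}}_{L^\infty,(r_p),d}$ for every admissible $(r_p)$, so by the projective description (\ref{Dtil}) we conclude $\varphi_1\varphi_2 \in \mathcal{D}^{\{M_p\}}_{L^\infty,d}$.

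There is no serious obstacle here; the only delicate point is bookkeeping the factor $2^{-|k|}$ generated by the rescaling, which is why the inequality is formulated with $(r_p)/2$ rather than with $(r_p)$ on the right-hand side, and why the hypothesis $r_1 > 2$ is required (so that Remark 1 applies and $(r_p)/2 \in \mathfrak{R}$). All other steps are routine Leibniz-type estimates using the two submultiplicativity properties already established.
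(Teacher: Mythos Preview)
Your argument is correct and follows the same route as the paper: Leibniz formula, the bound $\|D^l\varphi_i\|_\infty\le 2^{-|l|}R_{|l|}M_l\|\varphi_i\|_{(r_p)/2}$, then the submultiplicativity inequalities (\ref{Mkjd}) and (\ref{Rk-ine}) together with $\sum_{l\le k}\binom{k}{l}=2^{|k|}$ to cancel the factor $2^{-|k|}$. The only point to tighten is the last sentence: the projective description (\ref{Dtil}) requires finiteness of $\|\varphi_1\varphi_2\|_{(r_p)}$ for \emph{every} $(r_p)\in\mathfrak{R}$, not only those with $r_1>2$; the paper closes this via Remark~2 (any $(r_p)\in\mathfrak{R}$ can be replaced by one with $r_1>2$ without changing finiteness of the norm), and you should invoke the same observation.
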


 \begin{proof}
 To prove the second part of the proposition assume that $(r_{\!p})$ is a sequence in $\mathfrak{R}$ such that $r_1 > 2$.  According to Definition \ref{mult} and Remark 1,
we have $(r_{\!p})\!/{2}\in\mathfrak{R}$,
 where the members $\overline{r}_p$ of the sequence $(r_{\!p})\!/{2}$ are defined by $\overline{r}_p := r_p/2$ for $p\in\mathbb{N}$ and $\overline{r}_0 := 1$.

Clearly,
$\overline{R}_p = 2^{-p}R_p$ for $p\in\mathbb{N}_0$ and, consequently, $\overline{R}_{|k|} = 2^{-k}R_{|k|}$ for $k\in\mathbb{N}^d_0$, where
$(R_p)$ and $(\overline{R}_p)$ are the product sequences corresponding to
$(r_{\!p})$ and $(\overline{r}_p)$, respectively.

 Assume that $\varphi_1, \varphi_2\in {\cal{D}}^{\{M_p\}}_{L^{\infty},d}$.
Then  $\varphi_1, \varphi_2\in {\cal D}_{L^{\infty},(r_{\!p})\!/{2},d}^{\{M_p\}}$,
 according to (\ref{Dtil}).
Now it follows from (\ref{D-ri}), applied for the sequence $(\overline{r}_p)$, that
\begin{equation}\label{Dlphi}
 \|D^l\varphi_i\|_{\infty}\leq  \overline{R}_{|l|}M_l\,\|\varphi_i\|_{(\overline{r}_p)}
 = 2^{-j}R_{|l|}M_l\,\|\varphi_i\|_{{(r_{\!p})}\!/{2}},
 \qquad   l\in\mathbb{N}^d_0,\, i=1, 2.
 \end{equation}
By Leibniz' formula, (\ref{Dlphi}), (\ref{Mkjd}) and (\ref{Rk-ine}), we get for arbitrary $k\in\mathbb{N}^d_0$ the estimates:
 \begin{eqnarray}
 \|D^k(\varphi_1\, \varphi_2)\|_{\infty}
\hspace{-.25cm} &\leq&\hspace{-.25cm}
 \sum_{0\leq j\leq k}
 \binom{k}{j}\|D^j\,\, \varphi_1\|_{\infty}\,
 \|D^{k-j}\varphi_2\|_{\infty} \\
  &\leq& 2^{-k}\,\, \|\varphi_1\|_{{(r_{\!p})}\!/{2}}\,\,
  \|\varphi_2 \|_{{(r_{\!p})}\!/{2}}\,
       \sum_{0\leq j\leq k}
       \binom{k}{j} R_{|j|}M_j R_{|k-j|}M_{k-j}\nonumber\\
   &\leq& R_{|k|\,\,}M_{k}\,\, \|\varphi_1\|_{(r_{\!p})\!/2}\,\,
    \|\varphi_2\|_{( r_{\!p})\!/2}. \label{fi12inf}\nonumber
\end{eqnarray}

Consequently,
 \[
 \frac{\|D^k(\varphi\phi)\|_{\infty}}{R_{|k|}M_k} \leq \|\varphi\|_{{(r_{\!p})}\!/{2}} \cdot \|\phi\|_{{(r_{\!p})}\!/{2}}<\infty,\qquad k\in\mathbb{N}^d_0,
 \]
which completes the proof of (\ref{phi-12-ine}) and the second part od the proposition.

The first part follows from the second one, according to Remark 2.
 \end{proof}

\bigskip

 \noindent\textbf{Remark 3.}
 Assertion of Proposition\,\ref{p2} is true, in particular, for functions
 $\varphi_1, \varphi_2\in {\cal{D}}^{\{M_p\}}_d$
 and semi-norms of the form (\ref{DK-ri}).

 \bigskip

  We will need
  the following simple consequence of Proposition \ref{p2}:

 \begin{lemma}\label{lem-tet}
 Let $\theta$ be a function from ${\cal{D}}_d^{\{M_p\}}$ such that $\theta(x)=1$ for all $x$ in some neighbourhood of $0$ in $\mathbb{R}^d, \ |x|\leq1$ and let
 $\theta_j(x)= \theta(\frac{x}{j})$ for $x\in\mathbb{R}^d$ and $j\in\mathbb{N}$. Then for each
 $(r_{\!p})\in\mathfrak{R}$ with $r_1\geq 2$ there exists a constant $C(\theta, (r_{\!p}))>0$ such that
 \begin{equation}\label{Cter}
 \|(1-\theta_l)\varphi\|_{(r_{\!p})} \leq C(\theta, (r_{\!p}))\|\varphi\|_{(r_{\!p})\!/{2}}, \qquad \varphi \in {\cal{D}}_d^{\{M_p\}}.
 \end{equation}
\end{lemma}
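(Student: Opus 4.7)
The plan is to reduce the lemma to the product estimate of Proposition \ref{p2}, applied to the factorization $(1-\theta_l)\cdot\varphi$, and then to absorb the first factor into a constant independent of $l$ by exploiting the dilation structure of $\theta_l$.

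First I would check the ingredients needed to invoke Proposition \ref{p2}. Since $r_1\geq 2$, the rescaled sequence $(r_{\!p})/2$ lies in $\mathfrak{R}$ by Definition \ref{mult} and Remark 1. The factor $\varphi$ lies in ${\cal D}_d^{\{M_p\}}\subset{\cal D}_{L^{\infty},d}^{\{M_p\}}$ by construction. For the other factor, $\theta_l\in{\cal D}_d^{\{M_p\}}$ (it is a dilate of $\theta$), and the constant function $1_d$ belongs to ${\cal D}_{L^{\infty},d}^{\{M_p\}}$ with $\|1_d\|_{(s_{\!p})}=1$ for every $(s_{\!p})\in\mathfrak{R}$, since $M_0=R_0=1$. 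Consequently $1-\theta_l\in{\cal D}_{L^{\infty},d}^{\{M_p\}}$, and Proposition \ref{p2} applied with $\varphi_1:=1-\theta_l$, $\varphi_2:=\varphi$ gives
\[
\|(1-\theta_l)\varphi\|_{(r_{\!p})}\;\leq\;\|1-\theta_l\|_{(r_{\!p})/2}\cdot\|\varphi\|_{(r_{\!p})/2}.
\]

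The remaining task is to control $\|1-\theta_l\|_{(r_{\!p})/2}$ uniformly in $l\in\mathbb{N}$. By the triangle inequality,
\[
\|1-\theta_l\|_{(r_{\!p})/2}\;\leq\;1+\|\theta_l\|_{(r_{\!p})/2}.
\]
The chain rule gives $D^k\theta_l(x)=l^{-|k|}(D^k\theta)(x/l)$, and since $l\geq 1$,
\[
\|D^k\theta_l\|_{\infty}\;=\;l^{-|k|}\|D^k\theta\|_{\infty}\;\leq\;\|D^k\theta\|_{\infty},\qquad k\in\mathbb{N}_0^d.
\]
Dividing by $\overline{R}_{|k|}M_k$ (with $(\overline{R}_p)$ the product sequence of $(r_{\!p})/2$) and taking the supremum in $k$ yields $\|\theta_l\|_{(r_{\!p})/2}\leq\|\theta\|_{(r_{\!p})/2}$ for every $l\in\mathbb{N}$. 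Finiteness of $\|\theta\|_{(r_{\!p})/2}$ follows from $\theta\in{\cal D}_d^{\{M_p\}}$ together with Proposition \ref{p1}: if $K=\su\theta$, then $\|\theta\|_{(r_{\!p})/2}=\|\theta\|_{K,(r_{\!p})/2}<\infty$.

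Setting $C(\theta,(r_{\!p})):=1+\|\theta\|_{(r_{\!p})/2}$ and combining the two displayed inequalities yields (\ref{Cter}). The only subtle point is that Proposition \ref{p2} has to be applied in the $L^{\infty}$-version of the algebra, since $1-\theta_l$ is not compactly supported; this is precisely why Proposition \ref{p2} was proved for the whole space ${\cal D}_{L^{\infty},d}^{\{M_p\}}$ and not only for the compactly supported case of Remark 3. Beyond that, the argument is entirely routine.
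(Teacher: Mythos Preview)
Your proof is correct and follows essentially the same route as the paper, whose proof consists of the single sentence ``It suffices to use Leibniz' formula and Proposition~\ref{p2}.'' You have simply unpacked that sentence: you apply Proposition~\ref{p2} to the factorization $(1-\theta_l)\cdot\varphi$ and then supply the detail the paper leaves implicit, namely the uniform-in-$l$ bound $\|1-\theta_l\|_{(r_{\!p})/2}\leq 1+\|\theta\|_{(r_{\!p})/2}$ obtained from the dilation estimate $\|D^k\theta_l\|_{\infty}\leq\|D^k\theta\|_{\infty}$. One tiny quibble: Remark~1 and Proposition~\ref{p2} are stated for $r_1>2$, not $r_1\geq 2$, so strictly speaking the borderline case $r_1=2$ needs the observation that ``monotonously increasing'' in the definition of $\mathfrak{R}$ is not required to be strict (or else the lemma hypothesis should read $r_1>2$, matching Proposition~\ref{p2}); this is a harmless discrepancy already present in the paper itself.
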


\begin{proof} It suffices to use Leibniz' formula and Proposition \ref{p2}.
  \end{proof}

\section{Roumieu ultradistributions and their integrability}


\noindent\textbf{Definition 3}.
 Elements of
  the strong dual ${\cal{D'}}^{\{M_{p}\}}_d$ of the space ${\cal{D}}^{\{M_{p}\}}_d$
  are called
  {\it Roumieu ultradistributions} and elements of the strong dual ${\cal{D'}}^{\{M_p\}}_{L^{1},d}$
  of the space $\dot{{\cal{B}}}^{\{M_p\}}_d$
 are called {\it Roumieu integrable ultradistributions}.

\bigskip

\noindent\textbf{Remark 4.}
Since the space\ ${\cal{D}}^{\{M_p\}}$\ is
 dense in\ $\dot{{\cal{B}}}^{\{M_p\}}$\ and the respective inclusion mapping is continuous,
 the space\
 ${\cal{D'}}^{\{M_p\}}_{L^{1}}$\ of
 Roumieu integrable ultradistributions
        is a subspace of the space\ \ddr\ of Roumieu ultradistributions.

\bigskip

\noindent\textbf{Definition 4}.
 We define an {\it $\mathfrak{R}$-approximate unit} as a sequence
$(\p_{n})$ of ultradifferentiable functions
$\p_{n}\in\md_d$ which converges to $1$  in $\me_d$  and, for each
$(r_{\!p})\in\mathfrak{R}$,
 \begin{equation}\label{RRap}
 \sup_{n \in \mathbb{N}} \|\p_n\|_{(r_{\!p})}
 < \infty.
 \end{equation}

\bigskip

\noindent\textbf{Definition 5}.
 If an $\mathfrak{R}$-approximate unit $(\p_{n})$ satisfies the additional condition that
 for each compact set $K\subset \mathbb{R}^{d}$ there is
 an index $n_0 \in\mathbb{N}$ such that $\p_{n}(x)=1$ for all $n\geq n_0$ and $x\in K$,
 then we call  $(\p_{n})$  a {\it special $\mathfrak{R}$-approximate unit}.

\bigskip

We denote by ${\mathbb U}^{\{M_p\}}_d$ the class of all $\mathfrak{R}$-approximate units on $\mathbb{R}^{d}$
and by $\overline{\mathbb U}^{\{M_p\}}_d$
the class of all special $\mathfrak{R}$-approximate units on $\mathbb{R}^{d}$.



\bigskip

\noindent\textbf{Remark 5.}
 By the Denjoy-Carleman theorem, the spaces of ultradifferentiable functions defined in the previous section as well as the classes
 ${\mathbb U}_d^{\{M_p\}}$ and  $\overline{\mathbb U}_d^{\{M_p\}}$
 of approximate units contain sufficiently many members.

\bigskip




 Below we will
 prove our main theorem which is  a characterization,
consisting of
five equivalent conditions,
  of integrable Roumieu ultradistributions.
 The theorem is an analogue of the theorem of Dierolf and  Voigt
 concerning integrable distributions (see \cite{DiVo}) and of the theorem of Pilipovi\'{c} concerning
 ultradistributions of Beurling type (see \cite{Pil91}).

\bigskip
\noindent\textbf{Theorem.}
\textit{Assume that} $T\in{\cal{D}'}^{\{M_p\}}_d$. \textit{The following conditions
are equivalent:}

\smallskip
\indent
  $(a)$\;\; \textit{the functional} $T$ \textit{is continuous on} ${\cal{D}}^{\{M_p\}}_d$
  \textit{in the topology induced by}
$\dot{\cal{B}}^{\{M_p\}}_d$,
  \textit{i.e. there exist an}
  $(r_{\!p})\in\mathfrak{R}$ \textit{and a }
  $C>0$ \textit{such that}
\begin{equation}\label{a0}
|\langle T,\varphi\rangle| \leq C \|\varphi\|_{(r_p)}
\end{equation}
\textit{for all } $\varphi \in {\cal{D}}^{\{M_p\}}_d$;

\smallskip
$(b)$\;\; \textit{there exists an }
$(r_{\!p})\in\mathfrak{R}$ \textit{such that
for every } $\varepsilon>0$
\textit{there is a regular compact set } $K\subset \mathbb{R}^d$
\textit{for which the inequalty}
\[
|\langle T,\varphi\rangle| \leq \varepsilon  \|\varphi\|_{(r_p)}
\]
\textit{holds true for all } $\varphi \in {\cal{D}}^{\{M_p\}}_d$ \textit{satisfying }
$\su \varphi \cap K = \emptyset$;

\smallskip
$(c)$\;\;
    \textit{the sequence } $\left(\langle T,\p_n\rangle\right)$ \textit{is Cauchy for each } $\(\p_n\) \in {\mathbb U} ^{\{M_p\}}_d$;

\smallskip
$(d)$\;\;
    \textit{the sequence } $\left(\langle T,\p_n\rangle\right)$ \textit{is Cauchy
    for each } $\(\p_n\) \in \overline{\mathbb U}^{\{M_p\}}_d$;

\smallskip
$(e)$\;\; \textit{there exist an }
$(r_{\!p})\in\mathfrak{R}$, \textit{a }
$C>0$ \textit{and a regular compact }
$K \subset \mathbb{R}^d$ \textit{such that }
(\ref{a0}) \textit{holds true
for all } $\varphi \in {\cal{D}}_d^{\{M_p\}}$ \textit{satisfying } $\su \varphi
\cap K=\emptyset$.

\bigskip

 \begin{proof}

($a$)\,$\Rightarrow$($b$)\quad Assume that ($a$) is satisfied and fix a sequence $(r_{\!p})\in\mathfrak{R}$ and a constant
 $C>0$ such that (\ref{a0}) holds for all  $\varphi \in {\cal{D}}_d^{\{M_p\}}$.

 Suppose that ($b$) is not true, i.e. there is an $\varepsilon_0>0$ such that  for every regular compact set
 $K \subset \mathbb{R}^d$ one can find a function $\varphi \in {\cal{D}}_d^{\{M_p\}}$ with $\su \varphi_{K} \cap K=\emptyset$
  such that
  \[
  \langle T, \varphi_{K}\rangle > \varepsilon_0 \|\varphi_{K}\|_{(r_p)}.
  \]
 Then there exists an increasing sequence of regular compact sets $K_n \subset \mathbb{R}^d$
 and a sequence of functions $\varphi_n\in {\cal{D}}_d^{\{M_p\}}$ such that
 $\su \varphi_n\subset \mbox{Int}K_{n+1}\setminus K_n, \; \|\varphi_n\|_{(r_p)}=1$ and
 $\langle T, \varphi_n\rangle > \varepsilon_0$ for all $n\in\mathbb{N}$. Put
 \[
 \psi_k := \sum_{n=1}^k \varphi_n, \quad k\in\mathbb{N}.
 \]
 Clearly, $\psi_k\in {\cal{D}}_d^{\{M_p\}}$ and,
 since the functions $\varphi_n$ have disjoined supports, we have
  $\|\psi_k\|_{(r_{\!p})}=1$ and $|\langle T, \psi_k\rangle| \leq C$ for all $k\in\mathbb{N}$. On the other hand,
 due to the assumption, we get
 \[
 \langle T, \psi_k\rangle > k\,{\varepsilon_0}  \qquad \mbox{for \ all} \; k\in\mathbb{N},
 \]
which contradicts the above estimate.

 ($b$)\,$\Rightarrow$($c$)\quad Fix  $\(\p_n\) \in {\mathbb U} ^{\{M_p\}}_d$ and assume that
 $(r_{\!p})\in\mathfrak{R}$ is a sequence with the property described in ($b$).

 By Remark 1,
 we may additionally assume that $r_{\!p}>2$ for all $p\in\mathbb{N}$.
 Under the additional assumption, we have $(r_{\!p})\!/{2}\in\mathfrak{R}$
 and Lemma\,\ref{lem-tet} can be applied.

 Let $\theta$ and $\theta_l$ \ ($l\in\mathbb{N}$) be functions as described in Lemma\,\ref{lem-tet},
  so (\ref{Cter}) is true for some $C(\theta, (r_p))>0$. By (\ref{RRap}), we have
 \begin{equation}\label{N}
 N := 4\sup \{\|\p_n\|_{(r_{\!p})\!/{2}}: n\in\mathbb{N}\}<\infty.
 \end{equation}

 Fix $\varepsilon>0$ and, due to ($b$), choose a regular compact set $K \subset \mathbb{R}^d$
 such that
  \begin{equation}\label{Vfi}
 |\langle T,\varphi\rangle| \leq \frac{\varepsilon  \|\varphi\|_{(r_{\!p})}}{N C(\theta, (r_{\!p}))}\,
 \end{equation}
 for all $\varphi \in {\cal{D}}_d^{\{M_p\}}$ with $\su \varphi \cap K=\emptyset$.
 Now select $l\in\mathbb{N}$ so that $\theta_l(x)=1$ for all $x$ in a neighbourhood
 of $K$. Then the functions $\varphi_{l, k, n}:=(1-\theta_l)(\p_k-\p_n)$ have the property
  $\su \varphi_{l, k, n}\cap K=\emptyset$ for all $k, n\in\mathbb{N}$.
 Consequently, in view of (\ref{Vfi}), (\ref{Cter}) and (\ref{N}), we obtain
    \begin{eqnarray}
  |\langle T, \varphi_{l, k, n}\rangle| &\leq &
   \frac{\varepsilon}{N C(\theta, (r_p))} \|\varphi_{l, k, n}\|_{(r_{\!p})} \nonumber \\
  &\leq &  \frac{\varepsilon}{N}\|\p_k-\p_n\|_{(r_{\!p})\!/{2}}  \nonumber \\
  &\leq &  \frac{\varepsilon}{N}\left(\|\p_k\|_{(r_{\!p})\!/{2}}+\|\p_n\|_{(r_{\!p})\!/{2}}\right)\leq
  \frac{\varepsilon}{2}. \label{V1tee}
    \end{eqnarray}
 for all $k, l, n\in\mathbb{N}$. On the other hand,
 since $\(\p_n\)_{n\in\mathbb{N}}$ converges to $1$ in ${\cal{E}}_d^{\{M_{p}\}}$ and $\theta_l T\in{\cal{E}'}_d^{\{M_{p}\}}$,
 we have
  \begin{equation}\label{Vtee}
 |\langle \theta_lT,(\p_k-\p_n)\rangle| \leq  \frac{\varepsilon}{2}
 \end{equation}
 for sufficienly large $k$ and $n$.
 It remains to use (\ref{V1tee}) and (\ref{Vtee}) to get property ($c$).

  ($c$)\,$\Rightarrow$($d$)\quad is an obvious implication.

  ($d$)\,$\Rightarrow$($e$)\quad Assume $(d)$ and suppose that condition ($e$) does not hold.
  This means that for an arbitrary threesome: a sequence in $\mathfrak{R}$, a regular compact set in $\mathbb{R}^{d}$
  and a positive constant, one can find a function $\varphi \in{\cal{D}}_d^{\{M_p\}}$ with a support disjoint with the
  compact set for which (\ref{a0}) does not hold. In particular, for a given
  $(r^1_{\!p})\in\mathfrak{R}$ and
  the compact ball  $K_1 := \overline{B}(0, 1)$ one can find a function $\varphi_1\in{\cal{D}}_d^{\{M_p\}}$
  with $\su \varphi_1 \cap K_1=\emptyset$ and $ |\langle T,\varphi_1\rangle| > \|\varphi_1\|_{(r^1_{\!p})}$.
 According to Definition\,1, choose a sequence $(r^2_{\!p})\in\mathfrak{R}$ such that $(r^2_{\!p})\prec(r^1_{\!p})$.
 For $(r^2_{\!p})\in\mathfrak{R}$,
 the compact ball  $K_2 := \overline{B}(0, 2)$ and the constant $C_2:=4$ there exists a function $\varphi_2\in{\cal{D}}_d^{\{M_p\}}$
 such that $\su \varphi_2 \cap K_2=\emptyset$ and $ |\langle T,\varphi_2\rangle| > 2^2\|\varphi_2\|_{(r^2_{\!p})}$ and
 so on.
 %
 In this way, we inductively construct a sequence of sequences $(r^m_{\!p})\in\mathfrak{R}$
 with $(r^{m+1}_{\!p})\prec(r^{m}_{\!p})$
  and
  a sequence
  of functions $\varphi_m\in{\cal{D}}_d^{\{M_p\}}$
 with $\su \varphi_m \cap K_m=\emptyset$ such that
 \begin{equation}\label{Vm2}
  |\langle T,\varphi_m\rangle| > m^2 \|\varphi_m\|_{(r^m_{\!p})}>0
 \end{equation}
 for all $m\in\mathbb{N}$. Define
 \begin{equation}\label{psim}
 \displaystyle \psi_m := \frac{\varphi_m}{m \|\varphi_m\|_{ (r^m_{\!p})}}\qquad\mbox{for}\;  m\in\mathbb{N}.
 \end{equation}
 Clearly,
 $\psi_m\in{\cal{D}}_d^{\{M_p\}}$ and
 $\su \psi_m \cap K_m=\emptyset$. Moreover,
 \[
  \|\psi_m\|_{ (r^m_{\!p})}=\frac{1}{m} \;\;
 \mbox{and} \;\;  |\langle T, \psi_m\rangle| > m,
 \]
 by (\ref{Vm2}) and (\ref{psim}).

 Now if $\(\p_n\)$
 is a given special $\mathfrak{R}$-approximate unit, then the sequence
 $\(\widetilde{\p}_n\)$, defined by
 $\widetilde{\p}_n = {\p}_n + \psi_n$ for
 $n\in\mathbb{N}$, is also an $\mathfrak{R}$-approximate unit, because for every $(v_{\!p})\in\mathfrak{R}$
 we have $T_p=\prod_{i=0}^p v_{\!i} \geq \prod_{i=0}^pr^m_{\!i}$ for some $m\in\mathbb{N}$  and, consequently,
 \[
 \|\psi_m\|_{ (v_{\!p})}\leq\|\psi_m\|_{ (r^m_{\!p})}\leq\frac{1}{m}.
 \]
 On the other hand, we have
 \[
 |\langle T, \widetilde{\p}_n\rangle - \langle T, \p_n\rangle| = |\langle T,\psi_n\rangle| > n,
 \]
 for $n\in\mathbb{N}$, which means that at least one of the sequences $\left(\langle T, \widetilde{\p}_n\rangle\right)$
 and $\left(\langle T, \p_n\rangle\right)$
 is not convergent. This contradicts the assumed condition $(D)$.

 ($e$)\,$\Rightarrow$($a$)\quad Suppose that condition $(e)$ is satisfied for a sequence  $(r_{\!p})\in\mathfrak{R}$, a constant $C>0$ and
 a regular compact set $K \subset \mathbb{R}^d$. By Remark 2,
 we may assume that  $(r_{\!p})\!/{2}\in\mathfrak{R}$.

 Select a relatively compact neighbourhood $U$ of $K$
 and a function $\theta\in{\cal{D}}_d^{\{M_p\}}$ such that $\su \theta\subset U$ and $\theta(x)=1$ for all $x$
 in a certain open  neighbourhood of $K$, contained in $U$. By ($e$), we have
 \begin{equation}\label{V1te}
 |\langle T, (1-\theta)\varphi\rangle| \leq  C \|(1-\theta)\varphi\|_{(r_{\!p})} \qquad \mbox{for} \;\; \varphi\in{\cal{D}}_d^{\{M_p\}}.
 \end{equation}

 The mapping
 \[
 {\cal{D}}_d^{\{M_p\}} \ni \varphi\mapsto \langle T, \theta\varphi\rangle\in\mathbb{C}
 \]
 defines the Roumieu ultadistribution $\theta T\in{\cal{E}'}_d^{\{M_{p}\}}$ given by
 \[
 \langle \theta T, \varphi\rangle := \langle T, \theta\varphi\rangle,\qquad \theta\in{\cal{D}}_d^{\{M_p\}}.
 \]
 Therefore there exists a constant $C_1>0$ such that
  \begin{equation}\label{VteK}
 |\langle \theta T, \varphi\rangle| \leq C_1\sup\left\{\frac{|D^k\varphi|}{R_{|k|}M_{k}}\!:\; x\in K, \ k\in\mathbb{N}^d_0\right\}
 \leq C_1 \|\varphi\|_{(r_{\!p})}
 \end{equation}
 for all $\varphi\in{\cal{D}}_d^{\{M_p\}}$. Hence
  \begin{eqnarray*}
 |\langle T, \varphi\rangle| &\leq&  |\langle T, (1-\theta)\varphi\rangle| +  |\langle \theta T, \varphi\rangle|
  \leq C \|(1-\theta)\varphi\|_{(r_{\!p})} +  C_1 \|\varphi\|_{(r_{\!p})}  \\
   &\leq&  C\cdot C(\theta, (r_{\!p})) \|\varphi\|_{(r_{\!p})\!/{2}} +  C_2 \|\varphi\|_{(r_{\!p})\!/{2}}
 \end{eqnarray*}
 for a certain $C_2>0$, in view of (\ref{V1te}), (\ref{VteK}) and (\ref{Cter}) in Lemma \ref{lem-tet}.

 The above estimate shows that $T$ is a continuous functional in ${\cal{D}}_d^{\{M_p\}}$
 in the topology induced from $\dot{\cal{B}}_d^{\{M_p\}}$.
  \end{proof}

\section*{Acknowledgements}

This work was partly supported by the Center for
Innovation and Transfer of Natural Sciences and Engineering
Knowledge of University of Rzesz\'ow.

  \smallskip

\end{document}